\numberwithin{equation}{section}  
\numberwithin{figure}{section}
\newcommand{\rr}{\mathbb{R}}
\newcommand{\zz}{\mathbb{Z}}
\newcommand{\ci}{\mathbb{T}}
\newcommand{\wh}{\widehat}
\newcommand{\p}{\partial}
\newcommand{\ee}{\varepsilon}
\newtheorem{theorem}{Theorem}
\newtheorem{lemma}{Lemma}
\begin{document}
\title{H\"older Continuity of the Data to Solution Map for HR in the
Weak Topology}
\author{\textit{David Karapetyan} \\ University of Notre Dame}
\address{Department of Mathematics  \\
University of Notre Dame\\
Notre Dame, IN 46556 }
\date{}
\begin{abstract}
It is shown that the data to solution map for the hyperelastic rod equation is
H\"older continuous from bounded sets of Sobolev spaces with exponent $s > 3/2$
measured in a weaker Sobolev norm with index $r < s$ in both the
periodic and non-periodic cases. The proof is based on energy estimates coupled
with a delicate commutator estimate and multiplier estimate. 
\end{abstract}
\keywords{Hyperelastic rod equation, periodic, non-periodic,
initial value problem, Sobolev spaces, well-posedness,  
continuity of solution map, energy estimates.}
\subjclass[2000]{Primary: 35Q53}
\maketitle
\markboth{H\"older Continuity of the Data to Solution Map for HR in the
Weak Topology}{David Karapetyan}
\section{Introduction}
The hyperelastic rod (HR) equation
\begin{gather}
\p_t u =  -\gamma u \p_x u -
\p_{x} (1 - \p_{x}^{2})^{-1} \left[ \frac{3-\gamma}{2}u^2 +
\frac{\gamma}{2} \left( \p_x u \right)^2
\right], \ \gamma \in \rr \slash \{0\}
\label{hyperelastic-rod-equation}
\\
u(x,0) = u_0(x), \; \; x \in \rr, \; \; t \in \rr
\label{init-cond}
\end{gather}
was first
derived by Dai in \cite{Dai_1998_Model-equations} as a one-dimensional 
model for the propagation of finite-length and
small-amplitude axial deformation waves in thin cylindrical
rods composed of a compressible Mooney-Rivlin
material. Unlike competing models for small-amplitude wave propagation such as
the Korteweg-de Vries equation and its regularized counterpart the Boussinesq equation, for $\gamma \neq 0$ not all traveling wave solutions of HR are smooth. More precisely, there are a variety of traveling wave solutions to the HR equation that
can be obtained using various combinations of peaks, cusps, compactons,
fractal-like waves, and plateaus (see Lenells 
\cite{Lenells_2006_Traveling-waves}). 
\\
\\
For the case $\gamma = 0$, we obtain the 
BBM equation, first proposed by 
Benjamin, Bona, and Mahony 
\cite{Benjamin_1972_Model-equations} as a model for 
the unidirectional evolution of long waves.
Solitary-wave solutions to this 
equation are smooth, global, and orbitally stable (see Benjamin 
\cite{Benjamin_1972_The-stability-o}, Benjamin, Bona, and Mahony
\cite{Benjamin_1972_Model-equations}, and Lenells
\cite{Lenells_2006_Traveling-waves}). For the case $\gamma =1$, we obtain the Camassa-Holm equation, a bi-Hamiltonian
water wave equation first written explicitly by Camassa and Holm
\cite{Camassa-Holm-1993-An-integrable-shallow-water}. There is an extensive
literature devoted to the Camassa-Holm equation. We refer the reader to the work
of Himonas, Kenig, and Misiolek \cite{Himonas:2010p1187}, Himonas and Kenig
\cite{Himonas:2009fk}, Constantin and Lannes
\cite{Constantin-Lannes-2009-The-hydrodynamical-relevance-of-the-Camassa-Holm},
Bressan and Constantin \cite{Bressan_2007_Global-conserva}, 
Molinet \cite{Molinet_2004_On-well-posedne}, Constantin and Strauss
\cite{Constantin_2000_Stability-of-pe}, and the references therein.
\\
\\
The HR equation is well-posed in the sense of Hadamard (existence, uniqueness,
and continuous dependence of solutions on initial data) in Sobolev spaces
$H^{s}$, $s > 3/2$ on both the line and circle, see Yin
\cite{Yin_2003_On-the-Cauchy-p}, Zhou \cite{Zhou_2005_Local-well-pose}, and
Karapetyan \cite{Karapetyan:2010fk}. A natural question is whether or not the
continuous dependence can be strengthened to uniform dependence or better. For
Camassa-Holm, it was shown in \cite{Himonas:2009fk} and \cite{Himonas:2010p1187} 
that dependence is not uniform. The method relied upon the use of energy
estimates to show that that the difference between actual solutions and appropriately chosen approximate solutions with coinciding initial data is small (see
also Koch and Tzvetkov \cite{Koch_2005_Nonlinear-wave-}). Mirroring this method,
non-uniform dependence for HR was shown in \cite{Karapetyan:2010fk}. 
\\
\\
For the Burgers equation, it is also known that for $s > 3/2$, dependence is not
better than continuous. Furthermore, Kato \cite{Kato_1975_Quasi-linear-eq}
showed that for $s > 3/2$ the data to solution map $u_{0} \mapsto u(t)$ is not
H\"older continuous from a closed ball in $H^{s}$ centered at $0$ and measured
in the $H^{r}$ norm, $r < s$, to $C([0, T], H^{r})$, where $T$ depends upon
the $H^{s}$ radius of the ball. More precisely, for fixed $0 < \gamma < 1$
and fixed constant $c > 0$,
there exist solutions $u, v$ of Burgers with bounded initial data in $H^{s}$
(and hence, a common lifespan $T$) and $0 < t_{0} < T$ such that
\begin{equation*}
\begin{split}
\| u(t_{0}) - v(t_{0}) \|_{H^{r}} 
& > c \| u_{0} - v_{0} \|^{\gamma}_{H^{r}}.
\end{split}
\end{equation*}
However, for certain general quasi-linear hyperbolic systems, Kato also obtained
uniform continuity of the data to solution map for initial data in Sobolev
spaces with integer index, measured in a weaker Sobolev norm. More recently, Tao
\cite{Tao:2004p1537} obtained Lipschitz continuity of the data to solution map
for the Benjamin-Ono equation for $H^{1}$ initial data measured in $L^{2}$.
Herr, Ionescu, Kenig, and Koch \cite{Herr:2010p886} have also obtained Lipschitz
continuity in a weaker topology for the Benjamin-Ono with generalized
dispersion. Hence, it is reasonable to ask whether a result similar to these
holds for HR\@. Our main motivation stems from the work of Chen, Liu, and Zhang
\cite{Chen:2011fk} on the
b-family
\begin{gather}
\p_t u =  -u \p_x u -
\p_{x} (1 - \p_{x}^{2})^{-1} \left[ \frac{b}{2}u^2 +
\frac{3-b}{2} \left( \p_x u \right)^2
\right],
\label{b-family}
\\
u(x,0) = u_0(x), \; \; x \in \rr, \; \; t \in \rr
\label{init-cond-b-fam}
\end{gather}
for which they proved H\"older continuity of the data to solution map from a closed ball $B(0, h)$ in $H^{s}$, $s >
3/2$ (measured in the $H^{r}$ topology, $r <s$) to $C([0, T], H^{r})$, with $T
= T(h)> 0$ and H\"older index $\alpha = \alpha(b, s, r)$ given by 
\begin{equation*}
\begin{split}
\alpha = 
\begin{cases}
1, \quad & (s, r) \in \Omega_{1}
\\
1, \quad & b=3 \ \ \text{and} \ \ (s, r) \in \Omega_{2}
\\
2(s-1)/(s-r), \quad  & b\neq 3 \ \ \text{and} \ \ (s, r) \in \Omega_{2}
\\
s-r, \quad & (s, r) \in \Omega_{3}
\end{cases}
\end{split}
\end{equation*}
where
\begin{equation*}
\begin{split}
\Omega_{1} & = \left\{ (s, r): \   s > 3/2, \ 0 \le r \le s-1, \  r \ge 2-s \right\}
\\
\Omega_{2} & = \left\{(s, r): \  3/2 < s < 2, \ 0 < r < 2-s   \right\}
\\
\Omega_{3} & = \left\{ (s,r): \  s > 3/2, \ s-1 < r < s  \right\}.
\end{split}
\end{equation*}
Given this result, and the similarities between the $b$-family and HR (both can
be thought of as weakly dispersive nonlocal perturbations of Burgers), in this
work we study the continuity properties of the
data-to-solution map for the HR equation, expanding upon the work in
\cite{Karapetyan:2010fk}. More precisely, following \cite{Chen:2011fk} we show
the following result:
\begin{theorem}
For $\gamma \neq 0$, the
data to solution map for HR is H\"older continuous from $B_{H^{s}}(R)$ (in
the topology of $H^{r}$) to $C([0, T], H^{r})$, where $T = T(R)$, for $s >
3/2$, $-1 \le r < s$. More
precisely, consider the following sets 
\begin{equation*}
\begin{split}
& \Omega_{1} = \left\{ (s, \ r) \in \rr^{2}:
\ s>3/2, \ -1 \le r \le s-1, \  r \ge 2 -s  \right\}
\\
& \Omega_{2} = \left\{ (s, \ r) \in \rr^{2}:
\ 3/2 < s < 3, \ -1 \le r < 2-s \right\}
\\
& \Omega_{3} = \left\{ (s, \ r) \in \rr^{2}:
\  s>3/2, \  s-1 < r < s  \right\}.
\end{split}
\end{equation*}
\label{thm:main-thm}
\end{theorem}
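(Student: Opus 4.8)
\emph{Proof proposal.} The plan is to reduce the whole theorem to a single energy estimate on the region $\Omega_1$, where the data-to-solution map turns out to be \emph{Lipschitz}, and then to extract the other two regions by interpolation. First I would invoke the local well-posedness theory of Yin \cite{Yin_2003_On-the-Cauchy-p}, Zhou \cite{Zhou_2005_Local-well-pose}, and \cite{Karapetyan:2010fk}: for $u_0,v_0\in B_{H^s}(R)$ with $s>3/2$ there is a common lifespan $T=T(R)>0$ and a constant $M=M(R)$ with $\sup_{0\le t\le T}\big(\|u(t)\|_{H^s}+\|v(t)\|_{H^s}\big)\le M$. Writing $w=u-v$ and using $uu_x-vv_x=u\,w_x+w\,v_x$, $u^2-v^2=w(u+v)$, $(\p_xu)^2-(\p_xv)^2=\p_xw\,\p_x(u+v)$, the difference solves the transport equation
\[
\p_t w+\gamma\,u\,\p_x w=-\gamma\, w\,\p_x v-\p_x(1-\p_x^2)^{-1}\Big[\tfrac{3-\gamma}{2}w(u+v)+\tfrac{\gamma}{2}\p_xw\,\p_x(u+v)\Big],\qquad w(0)=u_0-v_0.
\]

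For $(s,r)\in\Omega_1$ I would apply $\Lambda^r=(1-\p_x^2)^{r/2}$ and pair with $\Lambda^r w$ in $L^2$. In the transport term split $\Lambda^r(u\,\p_xw)=[\Lambda^r,u]\p_xw+u\,\Lambda^r\p_xw$; the second piece contributes, after an integration by parts, $\tfrac12\int(\p_xu)(\Lambda^rw)^2\lesssim\|\p_xu\|_{L^\infty}\|w\|_{H^r}^2\lesssim_M\|w\|_{H^r}^2$ (using $H^{s-1}\hookrightarrow L^\infty$ for $s>3/2$), while the commutator piece requires the key estimate
\[
\big\|[\Lambda^r,u]\p_xw\big\|_{L^2}\lesssim\|u\|_{H^s}\|w\|_{H^r},\qquad s>3/2,\ \ 2-s\le r\le s-1,
\]
which I would prove by a Littlewood--Paley/paraproduct decomposition (using almost-orthogonality to avoid any $\ee$-loss, and working identically on $\R$ and $\ci$). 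The term $\gamma\, w\,\p_xv$ is controlled by the Sobolev-multiplication estimate $\|w\,\p_xv\|_{H^r}\lesssim\|w\|_{H^r}\|\p_xv\|_{H^{s-1}}\lesssim_M\|w\|_{H^r}$, valid since $s-1>1/2$ and $|r|\le s-1$; and since $\p_x(1-\p_x^2)^{-1}$ maps $H^{r-1}$ to $H^r$, the nonlocal term is bounded by a constant times $\|w(u+v)\|_{H^{r-1}}+\|\p_xw\,\p_x(u+v)\|_{H^{r-1}}$, the first summand being $\lesssim\|w\|_{H^r}\|u+v\|_{H^s}\lesssim_M\|w\|_{H^r}$ and the second $\lesssim\|\p_xw\|_{H^{r-1}}\|\p_x(u+v)\|_{H^{s-1}}\lesssim_M\|w\|_{H^r}$ --- this last estimate being exactly where the defining constraint $r\ge2-s$ of $\Omega_1$ is consumed. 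Collecting everything yields $\tfrac{d}{dt}\|w(t)\|_{H^r}^2\le C_M\|w(t)\|_{H^r}^2$, so Gronwall gives $\|w(t)\|_{H^r}\le e^{C_MT/2}\|u_0-v_0\|_{H^r}$ on $[0,T]$: the map is Lipschitz on $\Omega_1$, i.e.\ $\alpha=1$.

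To treat $(s,r)\in\Omega_2$ (so $3/2<s<3$ and $-1\le r<2-s<s$) I would note $(s,2-s)\in\Omega_1$, whence the previous step gives $\|w(t)\|_{H^{2-s}}\lesssim_T\|u_0-v_0\|_{H^{2-s}}$; interpolating the data between $H^r$ and $H^s$ and using the uniform $H^s$-bound, $\|u_0-v_0\|_{H^{2-s}}\le\|u_0-v_0\|_{H^r}^{1-\theta}\|u_0-v_0\|_{H^s}^{\theta}\lesssim_R\|u_0-v_0\|_{H^r}^{1-\theta}$ with $\theta=\frac{(2-s)-r}{s-r}$, and since $r<2-s$ we have $\|w(t)\|_{H^r}\le\|w(t)\|_{H^{2-s}}$, so $\|w(t)\|_{H^r}\lesssim_{T,R}\|u_0-v_0\|_{H^r}^{\,\alpha}$ with $\alpha=1-\theta=\frac{2(s-1)}{s-r}\in(0,1)$ (note that for $\gamma\ne0$ the coefficient $\gamma/2$ in front of $\p_xw\,\p_x(u+v)$ never vanishes, so on $\Omega_2$ one genuinely gets this exponent). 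For $(s,r)\in\Omega_3$ (so $s-1<r<s$) I would instead use $(s,s-1)\in\Omega_1$ to obtain $\|w(t)\|_{H^{s-1}}\lesssim_T\|u_0-v_0\|_{H^{s-1}}$, then interpolate $H^r$ between $H^{s-1}$ and $H^s$ and use $H^r\hookrightarrow H^{s-1}$ for the data:
\[
\|w(t)\|_{H^r}\le\|w(t)\|_{H^{s-1}}^{1-\nu}\|w(t)\|_{H^s}^{\nu}\lesssim_{T,R}\|u_0-v_0\|_{H^{s-1}}^{1-\nu}\le\|u_0-v_0\|_{H^r}^{\,\alpha},\qquad \nu=r-(s-1),\ \ \alpha=s-r\in(0,1).
\]
Taking the supremum over $t\in[0,T]$ in each case gives H\"older continuity into $C([0,T],H^r)$ with the stated exponents.

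The step I expect to be the main obstacle is the commutator estimate $\|[\Lambda^r,u]\p_xw\|_{L^2}\lesssim\|u\|_{H^s}\|w\|_{H^r}$, together with the companion Sobolev-multiplication/multiplier bounds, \emph{uniformly down to negative indices}: here $r$ goes as low as $-1$, and the reduction of $\Omega_2$ forces one to work at the possibly-negative index $2-s$. Classical Kato--Ponce commutator estimates are stated for $r\ge0$, and pushing them to $r<0$ while still only asking $s>3/2$ of the coefficient $u$ --- so that the single-derivative loss in the transport term is \emph{exactly} absorbed, with no surviving factor of $\|w\|_{H^{r+\ee}}$ --- is the delicate point, and it is precisely this balance that pins the admissible range of $r$ to the condition $r\ge2-s$ defining $\Omega_1$. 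Everything else (existence, uniqueness, the uniform-in-$R$ bounds, and the fact that all harmonic-analysis estimates hold verbatim on both $\R$ and $\ci$) is inherited from the known well-posedness theory or is routine once the two key estimates are in hand.
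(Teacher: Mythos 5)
Your proposal is correct and follows essentially the same route as the paper: a Gronwall/energy estimate on the difference in $H^r$ giving a Lipschitz bound on $\Omega_1$ (with the transport term handled by a commutator estimate valid down to negative $r$ and the nonlocal terms by the product estimate $\| fg\|_{H^{r-1}}\lesssim\|f\|_{H^{r-1}}\|g\|_{H^{s-1}}$ under $s+r\ge 2$, which is exactly where the constraint $r\ge 2-s$ enters), followed by Sobolev interpolation through the endpoints $r=2-s$ and $r=s-1$ to obtain the exponents $2(s-1)/(s-r)$ on $\Omega_2$ and $s-r$ on $\Omega_3$. The only cosmetic differences are your splitting of the quadratic transport term as $u\,\p_xw+w\,\p_xv$ rather than $\tfrac12\p_x[w(u+v)]$, and your plan to reprove the commutator estimate by paraproducts where the paper cites it from Himonas--Kenig--Misiolek.
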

\begin{center}
\begin{tikzpicture}[scale=1.5]
\draw [->] (0,0) -- (3,0) node [below] {$s$};
\draw [->] (0,-1) -- (0,3) node [left] {$r$};
\draw [->, dashed] (0,0) -- (3,3);
\draw [->, dashed] (0,-1) -- (3,2);
\draw [->, dashed] (0,2) -- (3,-1);
\draw [->, dashed] (0,-1) -- (3,-1);
\draw [->, dashed] (3/2,-1) -- (3/2, 3);
\fill[color=green, fill opacity=0.3] (1.5, 0.5) -- (3,2) -- (3,0) -- (3,-1);
\fill[color=red, fill opacity=0.3] (1.5, 0.5) -- (1.5,1.5) -- (3,3) -- (3,2);
\fill[color=blue, fill opacity=0.3] (1.5, 0.5) -- (1.5, -1) -- (3, -1);
\foreach \x/\xtext in {1, 2}
\draw[shift={(\x,0)}]  node[below] {$\xtext$};
\foreach \y/\ytext in {-1, 1, 2}
\draw[shift={(0,\y)}]  node[left] {$\ytext$};
\draw (2,1.5) node {$\Omega_{3}$};
\draw (2,0.5) node {$\Omega_{1}$};
\draw (2,-0.5) node {$\Omega_{2}$};
\end{tikzpicture}
\end{center}
Then for two initial data $u_{0}, v_{0} \in B_{H^{s}}(R)$, there exist unique
corresponding solutions \\ $u(x,t), v(x,t)$ for $0 \le t \le T= T(R)$ to the
HR equation \eqref{hyperelastic-rod-equation} which satisfy 
\begin{equation*}
\begin{split}
\| u(t) - v(t) \|_{H^{r}} \le C \| u_{0} - v_{0} \|_{H^{r}}^{\alpha(s, r)},
\quad 0
\le t \le T
\end{split}
\end{equation*}
where 
\begin{equation*}
\begin{split}
\alpha = 
\begin{cases}
1, \quad & (s,r) \in \Omega_{1} 
\\
2(s-1)/(s-r),  \quad & (s, r) \in \Omega_{2}
\\
s-r, \quad & (s, r) \in \Omega_{3}.
\end{cases}
\end{split}
\end{equation*}
%
%
%
%
%
%
%
%
We remark that this result is sharper then the analogue obtained in
\cite{Chen:2011fk} for the $b$-family. We are confident that the techniques
applied in this paper can be applied to sharpen the results obtained in
\cite{Chen:2011fk}.
\section{Proof of H\"older Continuity}
We note that the only significant difference between the proof of H\"older
continuity in the periodic and non-periodic cases is in the proof of Lemma~\ref{lem:frac-deriv}, which we address in Section~\ref{sec:pf-lemmas}. Hence, we focus our
attention on the proof of H\"older continuity in the periodic case. 
\subsection{Region $\Omega_{1}$} 
\label{ssec:reg-m-imp}
Let $u_{0}(x), v_{0}(x)
\in B_{H^{s}}(R)$, $s > 3/2$ be two initial data. Then from
the well-posedness theory for HR \cite{Karapetyan:2010fk}, we
know that there exists unique corresponding solutions $u, v \in C(I,
B_{H^{s}}(2R))$ to \eqref{hyperelastic-rod-equation}.
Set $v=u-w$. Then $v$ solves the Cauchy-problem
\begin{align}
\label{uniqueness-exp}
& \p_t v
=  -\frac{\gamma}{2} \p_x [v(u + w)] 
\\
\notag
& \phantom{\p_t v = }-\p_x (1 - \p_{x}^{2})^{-1} \left\{
\frac{3-\gamma}{2}[v(u+w)] + \frac{\gamma}{2}[\p_x v \cdot \p_x (u+w)]
\right\},
\\
& v(x,0) = u_{0}(x) - v_{0}(x).
\label{uniqueness-init-data}
\end{align}
Let
\begin{equation*}
D^{m} \doteq (1 - \p_x^2)^{m/2}, \quad m \in \rr.
\end{equation*}
Applying $D^r$ to both sides of \eqref{uniqueness-exp}, then 
multiplying both sides by $D^r v$ and integrating, we obtain
\begin{equation}
\begin{split}
\frac{1}{2} \frac{d}{dt} \|v\|_{H^r}^2
= & -\frac{\gamma}{2} \int_{\ci} D^r \p_x [v(u+w)] \cdot
D^r v \ dx
\\
& - \frac{3-\gamma}{2} \int_{\ci}  D^{r -2}
\p_x[v(u+w)] \cdot
D^r v \ dx  
\\
& - \frac{\gamma}{2} \int_{\ci} D^{r 
-2} \p_x [ \p_x v
\cdot \p_x (u+w)]\cdot D^r v \ dx.
\label{2v}
\end{split}
\end{equation}
We now estimate \eqref{2v} in parts.
\subsubsection{Estimate of Integral 1} 
Note that
\begin{equation}
\begin{split}
& \left |  -\frac{\gamma}{2} \int_{\ci} D^r \p_x [v(u+w)] \cdot
D^r v \ dx \right |
\\
& =
\left |
-\frac{\gamma}{2} \int_{\ci} \left[ D^r \p_x, \ u+w \right]v \cdot
D^r v \ dx - \frac{\gamma}{2} \int_{\ci} (u+w) D^r
\p_x v \cdot D^r v\ dx
\right | \\
& \lesssim \left |
\int_{\ci} \left[ D^r \p_x, \ u+w \right]v \cdot
D^r v \ dx \right |
+ \left | \int_{\ci} (u+w) D^r \p_x v
\cdot D^r v\
dx \right |.
\label{4v}
\end{split}
\end{equation}
Observe that integrating by parts gives
\begin{equation}
\begin{split}
\left | \int_{\ci} (u+w) D^r \p_x v \cdot
D^r v \ dx \right |
\le \|\p_x (u+w)\|_{L^\infty}
\|v\|_{H^r}^2.
\label{4'v}
\end{split}
\end{equation}
To estimate the remaining integral of \eqref{4v}, we shall need the following
following result taken from \cite{Himonas:2010p1187}:
\begin{lemma}
\label{cor1}
If $s > 3/2$ and $-1 \le r  \le s -1$, then
\begin{equation*}
\begin{split}
\|[D^r \p_x ,f]g\|_{L^2} \lesssim \|f\|_{H^s} \|g\|_{H^r}.
\end{split}
\end{equation*}
\end{lemma}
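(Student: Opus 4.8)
The plan is to reduce the commutator estimate to a Fourier-side (Bony/paraproduct) decomposition. Write $\wh{[D^r\p_x,f]g}(\xi) = \int \bigl(m(\xi)-m(\eta)\bigr)\wh f(\xi-\eta)\wh g(\eta)\,d\eta$ where $m(\xi) = i\xi\langle\xi\rangle^r$ with $\langle\xi\rangle = (1+\xi^2)^{1/2}$, and split the $\eta$-integral into the three Littlewood--Paley regimes $|\eta|\le \tfrac12|\xi-\eta|$ (high--low), $|\xi-\eta|\le\tfrac12|\eta|$ (low--high), and $|\eta|\sim|\xi-\eta|$ (high--high). In the first regime the key gain is the symbol bound $|m(\xi)-m(\eta)|\lesssim \langle\xi-\eta\rangle^{1+r}$ (for $-1\le r\le s-1$ one checks $1+r\ge 0$ so no cancellation is even needed, and $\langle\xi\rangle\sim\langle\xi-\eta\rangle$), which lets us estimate this piece by $\|D^{1+r}f\|_{?}\cdot\|g\|$ type quantities; since $1+r\le s$ this is controlled by $\|f\|_{H^s}\|g\|_{H^r}$ after a Cauchy--Schwarz / Young's-inequality convolution step, using that $H^{s-?}$ with the surviving exponent embeds in $L^\infty$ because $s>3/2$. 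The low--high regime is symmetric with the roles reversed, now exploiting the mean-value bound $|m(\xi)-m(\eta)|\lesssim |\xi-\eta|\,\langle\eta\rangle^{r}$ (here the derivative-of-symbol estimate $|m'(\zeta)|\lesssim\langle\zeta\rangle^{r}$ is what is used, and one spends one derivative on $f$, i.e. $\|D f\|$, leaving $\langle\eta\rangle^r\wh g$), again closing via $H^{s}\hookrightarrow$ Lipschitz-type control. The high--high regime is the most forgiving: both frequencies are comparable and large, so $\langle\xi\rangle^{1+r}\lesssim\langle\xi-\eta\rangle^{a}\langle\eta\rangle^{b}$ with $a+b = 1+r$ can be distributed to match $H^s$ on $f$ and $H^r$ on $g$ precisely when $r\le s-1$.

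The concrete mechanics I would use: pass to the operator acting on $L^2$, test against an arbitrary $h\in L^2$, and bound $\bigl|\int\!\!\int (m(\xi)-m(\eta))\wh f(\xi-\eta)\wh g(\eta)\overline{\wh h(\xi)}\,d\eta\,d\xi\bigr|$. After inserting the symbol estimates above, each regime produces a trilinear convolution integral of the form $\int\!\!\int F(\xi-\eta)G(\eta)H(\xi)$ with $F,G,H\in L^2$ (absorbing the weights $\langle\cdot\rangle^s\wh f$, $\langle\cdot\rangle^r\wh g$, $\wh h$ into $F,G,H$), and Young's inequality $\|F*G\|_{L^2}\le\|F\|_{L^2}\|G\|_{L^1}$ combined with Cauchy--Schwarz in the form $\|G\|_{L^1}\lesssim\|\langle\cdot\rangle^{\sigma}G\|_{L^2}$ for $\sigma>1/2$ does the job — this is exactly where the hypothesis $s>3/2$ enters (one needs the leftover Sobolev exponent on $f$ to exceed $1/2$). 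In the periodic case the integrals become sums over $\zz$ and the identical inequalities hold; no separate argument is needed here, which is consistent with the remark in the text that the periodic/non-periodic distinction only matters for Lemma~\ref{lem:frac-deriv}.

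The main obstacle is bookkeeping the endpoint exponents $r=-1$ and $r=s-1$ simultaneously: at $r=s-1$ the high--high regime is tight (no room to spare in the weight splitting $a+b=1+r=s$ with $a\le s$, $b\le r$), and at $r=-1$ the symbol difference $m(\xi)-m(\eta)$ genuinely can fail to decay, so one must be careful that the low--high piece — where $\langle\eta\rangle^{r}=\langle\eta\rangle^{-1}$ actually helps — is the one carrying the $g$-regularity, while the high--low piece uses $1+r=0$, i.e. the raw bound $|m(\xi)-m(\eta)|\lesssim|\xi-\eta|$ with no $\langle\xi\rangle$ weight at all. Since this lemma is quoted verbatim from \cite{Himonas:2010p1187}, I would in practice cite it directly; the sketch above is the route one would take to reprove it from scratch.
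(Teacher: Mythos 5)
The paper gives no proof of this lemma at all: it is quoted verbatim as a result ``taken from \cite{Himonas:2010p1187}'' and used as a black box, so your closing remark that in practice you would simply cite that reference is exactly what the paper does, and to that extent your proposal and the paper coincide. Your sketch of how one would reprove it is the standard route (and essentially the argument of the cited reference): write the commutator kernel as $m(\xi)-m(\eta)$ with $m(\zeta)=i\zeta(1+\zeta^2)^{r/2}$, split into the three frequency regimes, use the mean value bound $|m'(\zeta)|\lesssim\langle\zeta\rangle^{r}$ in the low--high regime, and close each piece by an $L^1*L^2$ convolution estimate whose $L^1$ factor is finite precisely because $s>3/2$; the constraints $r\ge -1$ and $r\le s-1$ enter exactly where you say they do. The one spot your bookkeeping leaves at the level of question marks is the high--low regime for $r$ near $s-1$: there you cannot put the whole weight $\langle\xi-\eta\rangle^{1+r}$ on $\wh f$ and then $L^1$-sum $\wh f$ (that would require $s-(1+r)>1/2$, which fails for $r$ close to $s-1$), nor can you $L^1$-sum $\wh g$ directly (that requires $r>1/2$). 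The correct move is to load the full weight $\langle\xi-\eta\rangle^{s}$ onto $\wh f$, kept in $L^2$, and transfer the deficit $\langle\xi-\eta\rangle^{1+r-s}\lesssim\langle\eta\rangle^{1+r-s}$ onto $\wh g$ --- legitimate because $1+r-s\le 0$ and $|\eta|\lesssim|\xi-\eta|$ in that regime --- after which $\|\langle\cdot\rangle^{1+r-s}\wh g\|_{L^1}\lesssim\|g\|_{H^{r}}$ by Cauchy--Schwarz, again using $2(s-1)>1$. With that adjustment your outline closes at both endpoints $r=-1$ and $r=s-1$, and, as you note, the periodic and non-periodic versions are identical, consistent with the paper's remark that only Lemma~\ref{lem:frac-deriv} requires separate treatment.
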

An application of 
Cauchy-Schwartz and Lemma~\ref{cor1} then yields 
\begin{equation}
\begin{split}
\left | \int_{\ci} [D^r \p_x, \ u+w] v
\cdot D^r v \ dx \right |
& \lesssim \|u+w\|_{H^s} 
\|v\|_{H^r}^2.
\label{7v}
\end{split}
\end{equation}
Combining \eqref{4'v} and \eqref{7v} and applying the Sobolev embedding 
theorem, we obtain the estimate
\begin{equation}
\begin{split}
\left |  -\frac{\gamma}{2} \int_{\ci} D^r \p_x [v(u+w)] \cdot
D^r v \ dx \right |
\lesssim \|u+w\|_{H^s} \|v\|_{H^r}^2, \quad s > 3/2, \ -1 \le r \le s-1.
\label{8v}
\end{split}
\end{equation}
\subsubsection{Estimate of Integral 2} We shall need the following.
%
%
%
%
%
%
%
%
\begin{lemma}
For $s > 3/2$, $r \le s$, $s + r \ge 2$, we have
\begin{equation*}
\begin{split}
\| fg \|_{H^{r-1}} \lesssim \| f \|_{H^{r-1}} \| g \|_{H^{s-1}}.
\end{split}
\end{equation*}
\label{lem:frac-deriv}
\end{lemma}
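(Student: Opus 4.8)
The plan is to rephrase the inequality, peel off one factor by duality, and reduce everything to a single bilinear estimate proved by a two-region Fourier-series argument. Set $a = r-1$ and $b = s-1$; the hypotheses become $b > 1/2$, $a \le b$, $a+b \ge 0$, and the claim reads $\|fg\|_{H^{a}} \lesssim \|f\|_{H^{a}}\|g\|_{H^{b}}$. By density it suffices to treat smooth $f,g$. I would separate two regimes. If $a \ge 0$ the claim is exactly the master estimate below with $\sigma = a$, $\tau = b$ (the two factors play symmetric roles since $fg=gf$). If $a < 0$, I would use that $H^{r-1}$ and $H^{1-r}$ are dual under the $L^2$ pairing:
\begin{align*}
\|fg\|_{H^{r-1}}
&= \sup_{\|\phi\|_{H^{1-r}}\le 1}\Big| \int fg\phi\,dx \Big|
= \sup_{\|\phi\|_{H^{1-r}}\le 1}\Big| \int f\,(g\phi)\,dx \Big| \\
&\le \|f\|_{H^{r-1}}\,\sup_{\|\phi\|_{H^{1-r}}\le 1}\|g\phi\|_{H^{1-r}},
\end{align*}
after which $\|g\phi\|_{H^{1-r}} \lesssim \|g\|_{H^{s-1}}\|\phi\|_{H^{1-r}}$ is again the master estimate, now with $\sigma = 1-r \ge 0$ and $\tau = s-1$. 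Here $\sigma \le \tau$ is precisely the hypothesis $s+r \ge 2$ and $\tau > 1/2$ is $s > 3/2$, so all three hypotheses get used.

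The master estimate I would isolate is: if $\tau > 1/2$ and $0 \le \sigma \le \tau$, then $\|gh\|_{H^{\sigma}} \lesssim \|g\|_{H^{\tau}}\|h\|_{H^{\sigma}}$. To prove it in the periodic case I would use Plancherel and substitute $\wh g(\xi) = \langle\xi\rangle^{-\tau}G(\xi)$, $\wh h(\xi) = \langle\xi\rangle^{-\sigma}H(\xi)$, so that $\|G\|_{\ell^2}=\|g\|_{H^\tau}$, $\|H\|_{\ell^2}=\|h\|_{H^\sigma}$ and $\langle\xi\rangle^{\sigma}\wh{gh}(\xi) = \sum_{\eta} K(\xi,\eta)\,G(\xi-\eta)H(\eta)$ with $K(\xi,\eta) = \langle\xi\rangle^{\sigma}\langle\xi-\eta\rangle^{-\tau}\langle\eta\rangle^{-\sigma}$. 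The elementary bound $\langle\xi\rangle \le 2\max(\langle\xi-\eta\rangle,\langle\eta\rangle)$ lets me split the $\eta$-sum into the region $\langle\eta\rangle \le \langle\xi-\eta\rangle$ and its complement. On the first region $\langle\xi\rangle^{\sigma} \lesssim \langle\xi-\eta\rangle^{\sigma}$, and then $\tau-\sigma \ge 0$ together with $\langle\eta\rangle \le \langle\xi-\eta\rangle$ collapses the kernel to $K \lesssim \langle\eta\rangle^{-\tau}$; on the complement $\langle\xi\rangle^{\sigma}\lesssim\langle\eta\rangle^{\sigma}$ collapses it to $K \lesssim \langle\xi-\eta\rangle^{-\tau}$. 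In each case the resulting sum is a convolution in which one of $|G|,|H|$ sits in $\ell^2$ and the weight $\langle\cdot\rangle^{-\tau}$ multiplies the other, so Young's inequality $\ell^1 \ast \ell^2 \hookrightarrow \ell^2$ closes it, using $\langle\cdot\rangle^{-\tau} \in \ell^2(\zz)$, which holds exactly because $\tau > 1/2$. The non-periodic case is identical with sums replaced by integrals and $\langle\cdot\rangle^{-\tau}\in L^2(\rr) \iff \tau > 1/2$.

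The step requiring genuine care is the collapse of the kernel. The derivative count is exactly balanced --- $\langle\xi\rangle^{\sigma}$ on the left is matched by $\tau$ derivatives on the right with only the surplus $\tau-\sigma$ to spare --- so I cannot afford to discard the frequency-localization restriction $\langle\eta\rangle \le \langle\xi-\eta\rangle$: bounding $K$ crudely and then applying Young forces either $\sigma > 1/2$ or $\tau-\sigma > 1/2$, and in the range dictated by the hypotheses both can fail (after the reduction, taking $s$ just above $3/2$ and $r$ just below $1$ makes $\sigma$ and $\tau-\sigma$ both at or below $1/2$). Transferring the surplus $\tau-\sigma$ derivatives from $\langle\xi-\eta\rangle$ onto $\langle\eta\rangle$ on the region where this is legitimate is precisely what leaves the square-summable weight $\langle\eta\rangle^{-\tau}$; once that observation is in place the rest is routine bookkeeping.
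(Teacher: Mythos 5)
Your proposal is correct and covers the full range $s>3/2$, $r\le s$, $s+r\ge 2$, but by a genuinely different route than the paper. The paper proves the estimate directly for $r\le 1/2$ via weighted Cauchy--Schwarz on the convolution and an explicit two-region bound of the weight sum $\sum_n (1+n^2)^{1-s}[1+(n-k)^2]^{r-1}\lesssim (1+k^2)^{r-1}$ (with the separate calculus Lemma~\ref{lem:calc} playing this role on the line), and then fills in $1/2\le r\le s$ by interpolating the multiplication operator $g\mapsto fg$ between the endpoint $H^{-1/2}\to H^{-1/2}$ and the algebra bound $H^{s-1}\to H^{s-1}$ using Lemma~\ref{prop:sob-interp}. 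You instead dualize away the factor carried in the possibly negative index $r-1$, reducing everything to the single tame product estimate $\|gh\|_{H^{\sigma}}\lesssim\|g\|_{H^{\tau}}\|h\|_{H^{\sigma}}$ for $0\le\sigma\le\tau$, $\tau>1/2$, proved by splitting the frequency sum according to which of $\langle\eta\rangle$, $\langle\xi-\eta\rangle$ dominates and applying Cauchy--Schwarz and Young; the hypothesis $s+r\ge2$ enters cleanly as $\sigma\le\tau$ in the dualized application, and $s>3/2$ as $\langle\cdot\rangle^{-\tau}\in\ell^2$. Your route buys a uniform treatment of the periodic and non-periodic cases and avoids both the operator-interpolation step and the borderline bookkeeping at $r=1/2$; the paper's computation is more explicit about the competing decay rates $(1+k^2)^{1-s}$ and $(1+k^2)^{r-1}$, which is where $s+r\ge2$ appears in its version. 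Your observation that a crude bound on the kernel (discarding the frequency localization) would fail in part of the admissible range is accurate and is indeed the one step requiring genuine care.
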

Applying Cauchy-Schwartz and Lemma~\ref{lem:frac-deriv}, we obtain
\begin{equation*}
\begin{split}
\left | - \frac{3-\gamma}{2} \int_{\ci}  D^{r -2}
\p_x[v(u+w)] \cdot
D^r v \ dx  \right |
& \lesssim \|u+w\|_{H^{r -1}} \|v\|_{H^r}^2
\end{split}
\end{equation*}
which implies
\begin{equation}
\begin{split}
\left | - \frac{3-\gamma}{2} \int_{\ci}  D^{r -2}
\p_x[v(u+w)] \cdot
D^r v \ dx  \right |
& \lesssim \|u+w\|_{H^{s}} \|v\|_{H^r}^2
\label{3v}
\end{split}
\end{equation}
for $s > 3/2, \ r \le s, \ \text{and} \ s + r \ge 2$.
\subsubsection{Estimate of Integral 3} We first apply
Cauchy-Schwartz to obtain
\begin{equation*}
\begin{split}
\left | - \frac{\gamma}{2} \int_{\ci} D^{r 
-2} \p_x [ \p_x v
\cdot \p_x (u+w)]\cdot D^r v \ dx \right | 
\lesssim 
\|[\p_x v \cdot \p_x (u+w)] \|_{H^{r -1}}
\|v\|_{H^r}.
\end{split}
\end{equation*}
Applying Lemma~\ref{lem:frac-deriv} and the inequality $\| f_{x}
\|_{H^{m-1}} \le \| f \|_{H^{m}}$,  we conclude that
\begin{equation}
\begin{split}
\left | - \frac{\gamma}{2} \int_{\ci} D^{r 
-2} \p_x [ \p_x v
\cdot \p_x (u+w)]\cdot D^r v \ dx \right | 
\lesssim \|u+w \|_{H^{s}}
\|v\|_{H^r}^2
\label{3'v}
\end{split}
\end{equation}
for $s > 3/2, \ r \le s, \ \text{and} \ s + r \ge 2$.
Grouping \eqref{8v}, \eqref{3v}, and \eqref{3'v}, we obtain
\begin{equation*}
\begin{split}
\frac{1}{2} \frac{d}{dt}
\|v\|_{H^r}^2
& \lesssim \|u+w\|_{H^s}
\|v\|_{H^r}^2, \quad | t | < T
\\
& \le 4R \| v \|_{H^{r}}^{2}.
\label{9v}
\end{split}
\end{equation*}
Letting $y(t) = \| v \|^{2}_{H^{r}}$, we obtain
\begin{equation*}
\begin{split}
\frac{dy}{dt} \le cy
\end{split}
\end{equation*}
where $c = c(s, r, R) > 0$. Hence
\begin{equation*}
\begin{split}
y(t) \le y(0) e^{ct}, \quad | t | < T
\end{split}
\end{equation*}
which implies
\begin{equation*}
\begin{split}
y(t) \le y(0) e^{cT}.
\end{split}
\end{equation*}
Substituting back in for $y$, we see that
\begin{equation*}
\begin{split}
\| v \|_{H^{r}}^{2} \le \| v(0) \|^{2}_{H^{r}} e^{cT}
\end{split}
\end{equation*}
or
\begin{equation}
\label{lip-ineq}
\begin{split}
& \| u(t) - w(t) \|_{H^{r}} \le C \| u_{0} - w_{0} \|_{H^{r}}, 
\\
& \text{for} \ | t | < T,
\ s > 3/2, \ -1 \le r \le s-1, \ s + r \ge 2.
\end{split}
\end{equation}
Hence, in region $\Omega_{1}$, the data to solution map is locally Lipschitz from
$B_{H^{s}}(R)$ (measured with the $H^{r}$
norm) to $C([-T, T], H^{r})$, with Lipschitz constant $C = C(s, r, R)$.
\subsection{Region $\Omega_{2}$} 
\label{ssec:case-4}
We have the estimate
\begin{equation}
\label{fgh}
\begin{split}
\| u(t) - w(t) \|_{H^{r}}
& < \|u(t) - w(t) \|_{H^{2-s}}.
\end{split}
\end{equation}
We see that \eqref{lip-ineq} is valid for $r = 2-s$, $3/2 < s \le 3$.
Hence, applying \eqref{lip-ineq} to \eqref{fgh}, we obtain 
\begin{equation*}
\begin{split}
\| u(t) - w(t) \|_{H^{r}}
\lesssim \|u_{0} - w_{0} \|_{H^{2-s}}.
\end{split}
\end{equation*}
We need the following interpolation
result. 
%
%
%
%
%
%
%
%
\begin{lemma}
For $m_{1} < m < m_{2}$,
\begin{equation*}
\begin{split}
\| f \|_{H^{m}} \le \| f \|_{H^{m_{1}}}^{(m_{2}-m)/(m_{2} - m_{1})} \| f
\|_{H^{m_{2}}}^{(m -m_{1})/(m_{2} - m_{1})}.
\end{split}
\end{equation*}
\label{lem:interp}
\end{lemma}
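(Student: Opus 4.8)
The plan is to reduce the statement to H\"older's inequality applied on the Fourier side, which is the standard route for Sobolev interpolation. Recall that for any $m \in \rr$ one may write
\[
\|f\|_{H^{m}}^{2} = \int_{\rr} (1+\xi^{2})^{m}\,|\wh{f}(\xi)|^{2}\,d\xi
\]
in the non-periodic case, with the integral replaced by the sum $\sum_{\xi \in \zz}$ in the periodic case; the two arguments are word-for-word identical, so I would carry out the computation for the line and simply remark on the periodic modification. First I would introduce the interpolation parameter
\[
\theta = \frac{m_{2}-m}{m_{2}-m_{1}}, \qquad 1-\theta = \frac{m-m_{1}}{m_{2}-m_{1}},
\]
noting that $\theta \in (0,1)$ precisely because $m_{1} < m < m_{2}$, and that $m = \theta m_{1} + (1-\theta)m_{2}$.

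Next I would factor the integrand pointwise in $\xi$ using this convex combination:
\[
(1+\xi^{2})^{m}\,|\wh{f}(\xi)|^{2} = \bigl[(1+\xi^{2})^{m_{1}}|\wh{f}(\xi)|^{2}\bigr]^{\theta}\,\bigl[(1+\xi^{2})^{m_{2}}|\wh{f}(\xi)|^{2}\bigr]^{1-\theta}.
\]
Applying H\"older's inequality with the conjugate exponents $p = 1/\theta$ and $p' = 1/(1-\theta)$ (conjugate since $\theta + (1-\theta) = 1$) then gives
\[
\int_{\rr} (1+\xi^{2})^{m}|\wh{f}|^{2}\,d\xi \le \Bigl(\int_{\rr} (1+\xi^{2})^{m_{1}}|\wh{f}|^{2}\,d\xi\Bigr)^{\theta}\Bigl(\int_{\rr} (1+\xi^{2})^{m_{2}}|\wh{f}|^{2}\,d\xi\Bigr)^{1-\theta},
\]
i.e.\ $\|f\|_{H^{m}}^{2} \le \|f\|_{H^{m_{1}}}^{2\theta}\,\|f\|_{H^{m_{2}}}^{2(1-\theta)}$. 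Taking square roots yields exactly the asserted bound with exponents $(m_{2}-m)/(m_{2}-m_{1})$ and $(m-m_{1})/(m_{2}-m_{1})$.

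I do not expect any genuine obstacle here: this is a routine convexity argument. The only points warranting a sentence of care are that $\theta$ must lie strictly in $(0,1)$ so that the H\"older exponents are finite and positive (this is exactly the hypothesis $m_{1}<m<m_{2}$), that the identical estimate holds on the circle with the frequency integral replaced by a sum, and that if either factor on the right-hand side is infinite the inequality holds trivially, so one may assume $f \in H^{m_{1}} \cap H^{m_{2}}$ throughout.
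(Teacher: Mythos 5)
Your proof is correct: the convex-combination identity $m=\theta m_{1}+(1-\theta)m_{2}$ together with H\"older's inequality with exponents $1/\theta$ and $1/(1-\theta)$ on the Fourier side is exactly the canonical argument for this interpolation inequality, and your bookkeeping of the exponents matches the stated conclusion. The paper itself states Lemma~\ref{lem:interp} without proof, treating it as standard, so there is nothing to contrast with; your write-up supplies precisely the routine verification the author omitted, including the correct side remarks about $\theta\in(0,1)$ and the periodic sum replacing the integral.
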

Applying the lemma with $m_{1} =r$, $m = 2-s$, and $m_{2} = s$ (notice
$m_{2} > m$ for $s > 1$), we bound 
\begin{equation*}
\begin{split}
\| u_{0} - w_{0} \|_{H^{2-s}} 
& \le \| u_{0} - w_{0} \|_{H^{r}}^{\frac{2(s-1)}{s-r}} \| u_{0} - w_{0}
\|_{H^{s}}^{\frac{2-s-r}{s-r}}
\\
&  \lesssim \| u_{0} - w_{0} \|_{H^{r}}^{\frac{2(s-1)}{s-r}}.
\end{split}
\end{equation*}
We conclude that
\begin{equation*}
\begin{split}
\| u(t) - w(t) \|_{H^{r}} \lesssim \|u_{0} - w_{0} \|_{H^{r}}^{\frac{2(s-1)}{s-r}}.
\end{split}
\end{equation*}
\subsection{Region $\Omega_{3}$} 
\label{ssec:case-2}
Applying Lemma~\ref{lem:interp} with $m_{1} = s-1$, $m =r$ and $m_{2} = s$, and 
using the estimate
\begin{equation*}
\begin{split}
\|u - w \|_{H^{s}} \le 4R
\end{split}
\end{equation*}
we obtain
\begin{equation}
\label{pre-lip-ap}
\begin{split}
\| u(t) - w(t) \|_{H^{r}} & \lesssim \| u(t) - w(t) \|_{H^{s-1}}^{s-r} \|u(t)
- w(t)\|_{H^{s}}^{1-s+r}
\\
& \simeq \| u(t) - w(t) \|_{H^{s-1}}^{s-r}.
\end{split}
\end{equation}
We see that \eqref{lip-ineq} is valid for  $r = s-1$, $s \ge 3/2$. Hence,
applying \eqref{lip-ineq} to \eqref{pre-lip-ap} gives
\begin{equation*}
\begin{split}
\| u(t) - w(t) \|_{H^{r}} & \lesssim  \|u_{0} - w_{0}\|_{H^{s-1}}^{s-r} 
\\
& \le
\|u_{0} - w_{0}\|_{H^{r}}^{s-r}.
\end{split}
\end{equation*}
This completes the proof of Theorem~\ref{thm:main-thm}. \qed
%
%
%
%
%
%
%
%
%
%
\section{Proofs of Lemmas} 
\label{sec:pf-lemmas}
\begin{proof}[Proof of Lemma~\ref{lem:frac-deriv}]
For the periodic case we have
\begin{equation*}
\begin{split}
\| fg\|_{H^{r-1}}^{2}
& \le  \sum_{n \in \zz}  (1 + n^{2})^{r-1}\left [ \sum_{k \in \zz}
| \wh{f}(k) |  | \wh{g}(n - k) | (1 +
k^{2})^{\frac{1-s}{2}} (1 + k^{2})^{\frac{s-1}{2}}
\right ]^{2}.
\end{split}
\end{equation*}
Applying Cauchy Schwartz in $k$, we bound this by
\begin{equation*}
\label{np-key-term}
\begin{split}
\| f \|_{H^{s-1}}^{2} \sum_{n \in \zz}  (1 + n^{2})^{r-1}\sum_{k \in \zz} \frac{|
\wh{g}(n - k) |^{2}}{(1 + k^{2})^{s-1}}.
\end{split}
\end{equation*}
But by change of variables and Fubini
\begin{equation}
\label{opp}
\begin{split}
\sum_{n \in \zz}  (1 + n^{2})^{r-1}\sum_{k \in \zz} \frac{|
\wh{g}(n - k) |^{2}}{(1 + k^{2})^{s-1}}
& = \sum_{k \in \zz}| \wh{g}(k) |^{2} \sum_{n \in \zz}  
\frac{1}{(1 + n^{2})^{s-1}[1 + (n - k)^{2}]^{1-r}}.  
\end{split}
\end{equation}
Without loss of generality, we assume $k \ge 0$ and write 
\begin{equation*}
\begin{split}
&  \sum_{n \in \zz}  
\frac{1}{(1 + n^{2})^{s-1}[1 + (n - k)^{2}]^{1-r}}  
\\
& = 
\sum_{0 \le n \le 2k} \frac{1}{(1 + n^{2})^{s-1}[1 + (n - k)^{2}]^{1-r}} 
+ \sum_{n > 2k} \frac{1}{(1 + n^{2})^{s-1}[1 + (n - k)^{2}]^{1-r}}
\\
& + \sum_{n \ge 0} \frac{1}{(1 + n^{2})^{s-1}[1 + (n + k)^{2}]^{1-r}} 
\\
& \doteq I + II + III.
\end{split} 
\end{equation*}
We have the estimate
\begin{equation}
\label{est-tem}
\begin{split}
II 
& \le \sup_{n > 2k} \frac{1}{\left[ 1 + (n-k)^{2} \right]^{1-r}}
\sum_{n > 2k} \frac{1}{(1 + n^{2})^{s-1}} 
\\
& \lesssim (1 + k^{2})^{r-1}, \quad
s > 3/2.
\end{split}
\end{equation}
Similarly
\begin{equation*}
\begin{split}
III \lesssim (1 + k^{2})^{r-1}, \quad s > 3/2.
\end{split}
\end{equation*}
To estimate $I$, we assume without loss of generality that $k$ is even and write
\begin{equation*}
\begin{split}
&  I = \sum_{0 \le n \le k/2} \frac{1}{(1 + n^{2})^{s-1}[1 + (n - k)^{2}]^{1-r}} 
+ \sum_{k/2 < n \le 3k/2} \frac{1}{(1 + n^{2})^{s-1}[1 + (n - k)^{2}]^{1-r}} 
\\
& + \sum_{3k/2 < n \le 2k} \frac{1}{(1 + n^{2})^{s-1}[1 + (n - k)^{2}]^{1-r}} 
\\
& \doteq i + ii + iii.
\end{split} 
\end{equation*}
Hence, estimating as in \eqref{est-tem}, we have
\begin{equation*}
\begin{split}
i, iii \lesssim (1 + k^{2})^{r-1}, \quad
s > 3/2
\end{split}
\end{equation*}
and
\begin{equation*}
\begin{split}
ii & \le \sup_{k/2 \le n \le 3k/2} \frac{1}{\left( 1 + n^{2} \right)^{s-1}}
\sum_{k/2 \le n \le 3k/2} \frac{1}{[1 + (n-k)^{2}]^{1-r}} \\
& \lesssim \frac{1}{(1 + k^{2})^{s-1}}, \quad r \le 1/2.
\end{split}
\end{equation*}
Therefore, 
\begin{equation*}
\begin{split}
I + II + III & \lesssim (1 + k^{2})^{1-s} + (1 + k^{2})^{r-1}, \quad r \le 1/2, \ s > 3/2
\\
& \lesssim  (1 + k^{2})^{r-1}, \quad r -1 \ge 1-s.
\end{split}
\end{equation*}
Applying this estimate to \eqref{opp} and recalling \eqref{np-key-term},
we obtain
\begin{equation}
\label{yhh}
\begin{split}
\| f g \|_{H^{r-1}} \lesssim \| f \|_{H^{s-1}} \| g \|_{H^{r-1}},
\quad s > 3/2, \ r \le 1/2, \ s + r \ge 2.
\end{split}
\end{equation}
We now need the following result taken from Taylor \cite{Taylor:1995kx}.
%
%
%
%
%
%
%
%
%
\begin{lemma}[Sobolev Interpolation]
For fixed $j \le k, m \le n$ suppose that \\ $T: H^{j} \to H^{m}$ continuously
and $T: H^{k} \to H^{n}$. Then\\ $T: H^{\theta j + (1 - \theta)k} \to H^{\theta
m + (1 - \theta) n}$ continuously for all $\theta \in (0,1]$.
\label{prop:sob-interp}
\end{lemma}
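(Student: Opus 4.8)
The plan is to reduce the claim to a pair of $L^{2}\to L^{2}$ bounds and then invoke analytic (Stein) interpolation. The key observation is that for every $\sigma\in\rr$ the operator $D^{i\sigma}$ is, on both $\rr$ and $\ci$, a Fourier multiplier whose symbol has modulus $1$ (namely $(1+\xi^{2})^{i\sigma/2}$, respectively the sequence $(1+n^{2})^{i\sigma/2}$), hence unitary on $L^{2}$; consequently $\|D^{\mu}h\|_{L^{2}}=\|D^{\mathrm{Re}\,\mu}h\|_{L^{2}}$ for all $\mu\in\cc$. Rewriting $H^{s}=D^{-s}L^{2}$, the hypotheses that $T$ maps $H^{j}\to H^{m}$ and $H^{k}\to H^{n}$ boundedly are exactly the statements that $D^{m}TD^{-j}$ and $D^{n}TD^{-k}$ are bounded on $L^{2}$; call their norms $M_{0}$ and $M_{1}$.

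First I would introduce, for $z$ in the closed strip $\{\,0\le\mathrm{Re}\,z\le 1\,\}$, the analytic family
\[
T_{z}\doteq D^{\,zm+(1-z)n}\,T\,D^{-zj-(1-z)k}.
\]
On the line $\mathrm{Re}\,z=1$ one has $T_{1+it}=D^{\,it(m-n)}\,(D^{m}TD^{-j})\,D^{-it(j-k)}$, so by the unitarity noted above $\|T_{1+it}\|_{L^{2}\to L^{2}}=M_{0}$; likewise $\|T_{it}\|_{L^{2}\to L^{2}}=M_{1}$ on $\mathrm{Re}\,z=0$. For $f,g$ ranging over a dense test class (Schwartz functions on $\rr$, trigonometric polynomials on $\ci$) the scalar function $z\mapsto\langle T_{z}f,g\rangle_{L^{2}}$ is holomorphic on the open strip, continuous on its closure, and of admissible growth there. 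The three-lines lemma then gives $|\langle T_{\theta}f,g\rangle|\le M_{0}^{\theta}M_{1}^{1-\theta}\|f\|_{L^{2}}\|g\|_{L^{2}}$, i.e.\ $\|T_{\theta}\|_{L^{2}\to L^{2}}\le M_{0}^{\theta}M_{1}^{1-\theta}$. Since $T_{\theta}=D^{\,\theta m+(1-\theta)n}\,T\,D^{-\theta j-(1-\theta)k}$, unwinding the $D$'s yields
\[
\|T\|_{H^{\theta j+(1-\theta)k}\to H^{\theta m+(1-\theta)n}}\le M_{0}^{\theta}M_{1}^{1-\theta}<\infty,
\]
which is precisely the assertion, valid for all $\theta\in(0,1]$ (the case $\theta=1$ being the first hypothesis).

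The periodic and non-periodic cases differ only in that the Fourier transform is a series rather than an integral; in both, the symbol of $D^{i\sigma}$ has modulus one and density of the test class in every $H^{s}$ is standard, so the argument is uniform. The one point requiring genuine care --- the main obstacle --- is verifying the admissibility hypotheses of the three-lines lemma for $z\mapsto\langle T_{z}f,g\rangle$: holomorphy on the open strip and, above all, an a priori bound of at most double-exponential growth on the closed strip. This is exactly where one needs $T$ bounded on \emph{both} endpoint couples, not just one: for $\mathrm{Re}\,z=\sigma\in[0,1]$ the index $-\sigma j-(1-\sigma)k$ lies between $-j$ and $-k$, so $D^{-zj-(1-z)k}f\in L^{2}$ with norm controlled independently of $\mathrm{Im}\,z$ for fixed Schwartz (or polynomial) $f$, and then $T$ followed by the analytic $D$-power on the left is handled similarly; alternatively one truncates $f,g$ to finitely many frequencies, derives the bound with constants independent of the truncation, and passes to the limit. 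With this in hand the interpolation is routine.
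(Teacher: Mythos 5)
The paper offers no proof of this lemma at all --- it is imported verbatim from Taylor \cite{Taylor:1995kx} --- so there is nothing in the text to compare your argument against; you are supplying a proof where the paper supplies a citation. What you wrote is the standard complex-interpolation proof of the interpolation property of the Sobolev scale (and essentially the argument underlying Taylor's result): conjugate $T$ by the analytic family $D^{zm+(1-z)n}(\cdot)\,D^{-zj-(1-z)k}$, use that purely imaginary powers of $D$ are unitary on $L^{2}$ to identify the operator norms on the two boundary lines with the endpoint bounds $M_{0}$, $M_{1}$, and apply the three-lines lemma. The boundary computations are correct, and your endpoint orientation ($z=1$ giving the $H^{j}\to H^{m}$ couple) matches the lemma's convention $\theta\in(0,1]$. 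The one step I would tighten is the admissibility check, which you correctly flag as the crux: to bound $\langle T_{z}f,g\rangle$ on the whole closed strip it is not enough that $D^{-zj-(1-z)k}f\in L^{2}$; you need it in $H^{k}$ (the smaller domain space, since $j\le k$), because the left-hand factor $D^{zm+(1-z)n}$ has real order as large as $n$ and only the $H^{k}\to H^{n}$ hypothesis controls $\|T(\cdot)\|_{H^{n}}$. For $f$ a Schwartz function (or a trigonometric polynomial) this is automatic, with $\|D^{-zj-(1-z)k}f\|_{H^{k}}=\|f\|_{H^{\sigma(k-j)}}\le\|f\|_{H^{k-j}}$ uniformly in $\mathrm{Im}\,z$ for $\sigma=\mathrm{Re}\,z\in[0,1]$, so $z\mapsto\langle T_{z}f,g\rangle$ is actually bounded on the closed strip and the three-lines lemma applies; your frequency-truncation fallback accomplishes the same thing. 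With that point made explicit the proof is complete in both the periodic and non-periodic settings.
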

To apply Lemma~\ref{prop:sob-interp}, we note that \eqref{yhh}
and the algebra property of the Sobolev space $H^{t}$, $t > 1/2$ imply that for $s > 3/2$
\begin{equation*}
\begin{split}
\| f g \|_{H^{r-1}} \lesssim \| g \|_{H^{r-1}}, \  \text{where} \ 
r=1/2 \ \text{or} \  r =s, \ \| f \|_{H^{s-1}} =1.
\end{split}
\end{equation*}
That is, for fixed $f \in H^{s-1}$ with $\| f \|_{H^{s-1}} =1$, the map $g \mapsto
Tg = fg$ is linear continuous from $H^{-1/2}$ to $H^{-1/2}$ and from $H^{s-1}$ to
$H^{s-1}$. Therefore, by Lemma~\ref{prop:sob-interp}, it is continuous from
$H^{\theta (s-1) + (1 - \theta)(-1/2) }$ to $H^{\theta (s-1) + (1 - \theta)(-1/2) }$ for all $\theta \in
[0, 1)$. Setting $\theta = (r-1/2)/(s-1/2)$, $ 1/2 \le r < s$, we obtain that $T$ is
continuous from $H^{r-1}$ to $H^{r-1}$. Since $T$ is also linear from $H^{r-1}$
to $H^{r-1}$, we see that 
\begin{equation*}
\begin{split}
\| f g \|_{H^{r-1}} \lesssim \| g \|_{H^{r-1}}, \quad 1/2 \le r \le s, \ s > 3/2, \ \| f \|_{H^{s-1}} =1
\end{split}
\end{equation*}
and so for general $f \in H^{s-1}$ we have 
\begin{equation}
\label{hhh}
\begin{split}
\| f g \|_{H^{r-1}} \lesssim \|f \|_{H^{s-1}}
\| g \|_{H^{r-1}}, \quad 1/2 \le r \le s, \ s > 3/2. 
\end{split}
\end{equation}
Combining \eqref{yhh} and \eqref{hhh} completes the proof in the periodic
case. For the non-periodic case we have
\begin{equation*}
\begin{split}
\| fg\|_{H^{r-1}}^{2}
\le \int_{\rr}  (1 + \xi^{2})^{r-1}\left [ \int_{\rr}
| \wh{f}(\eta) |  | \wh{g}(\xi - \eta) | (1 +
\eta^{2})^{\frac{1-s}{2}} (1 + \eta^{2})^{\frac{s-1}{2}}
d \eta \right ]^{2} d \xi.
\end{split}
\end{equation*}
Applying Cauchy Schwartz in $\eta$, we bound this by
\begin{equation*}
\begin{split}
\| f \|_{H^{s-1}}^{2} \int_{\rr}  (1 + \xi^{2})^{r-1}\int_{\rr} \frac{|
\wh{g}(\xi - \eta) |^{2}}{(1 + \eta^{2})^{s-1}} d \eta d \xi.
\end{split}
\end{equation*}
We now wish to bound the integral term. Applying a change of variable, we see it
is equal to
\begin{equation*}
\begin{split}
\int_{\rr} (1 + \xi^{2})^{r-1} \int_{\rr}
\frac{| \wh{g}(\eta) |^{2}}{[1 + (\xi - \eta)^{2}]^{s-1}} d \eta d \xi
\end{split}
\end{equation*}
which by Fubini is equal to
\begin{equation}
\label{int-pre-calc-lem}
\begin{split}
& \int_{\rr} | \wh{g}(\eta) |^{2} \int_{\rr} \frac{1}{\left[
1 + (\xi - \eta)^{2} \right]^{s-1} (1 + \xi^{2})^{1-r}} d \xi d \eta
\\
& \lesssim \int_{\rr} | \wh{g}(\eta) |^{2} \int_{\rr} \frac{1}{\left[
1 + |\xi - \eta| \right]^{2(s-1)} (1 + |\xi|)^{2(1-r)}} d \xi d \eta.
\end{split}
\end{equation}
We now need the following lemma. 
\begin{lemma}
\label{lem:calc}
Fix $p, q > 0$ such that $p +q >1$, and let $r =\min\left\{p - \ee_{q}, q -
\ee_{p}, p+q-1 \right\}$, where $\ee_{j} > 0$ is arbitrarily small for $j = 1$
and $\ee_{j} = 0$ for $j \neq 1$. Adopt the notation
$\langle x - \alpha \rangle  \doteq 1 + | x - \alpha |$. Then 
\begin{equation*}
\begin{split}
& \int_{\rr} \frac{1}{\langle x - \alpha \rangle ^{p} \langle x -
\beta \rangle
^{q}} d x
\le \frac{c_{p,q, \ee}}{\langle \alpha - \beta \rangle ^{r}}. 
\end{split}
\end{equation*}
\end{lemma}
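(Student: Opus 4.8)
Lemma~\ref{lem:calc} is a standard convolution‑type bound that I would prove by an elementary region decomposition; no cancellation is involved. First I would normalize: the substitution $x \mapsto x + \alpha$ turns the integral into $\int_{\rr}\langle x\rangle^{-p}\langle x+(\alpha-\beta)\rangle^{-q}\,dx$, which depends on $(\alpha,\beta)$ only through $d := |\alpha-\beta|$ (reflect $x\mapsto -x$ if $\alpha-\beta<0$). So it suffices to show
\[
J(d) := \int_{\rr}\frac{dx}{\langle x\rangle^{p}\,\langle x+d\rangle^{q}} \ \lesssim\ \langle d\rangle^{-r},\qquad d\ge 0 .
\]
The range $0\le d\le 1$ is immediate: since $\langle\cdot\rangle\ge 1$ there is no singularity, the hypothesis $p+q>1$ gives $J(d)<\infty$ with a bound depending only on $p,q$, and $\langle d\rangle^{-r}$ is bounded below there. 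Hence I would assume $d\ge 1$.

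\textbf{Splitting.} For $d\ge 1$ I would write $\rr = A\cup B\cup C$ with $A=\{|x|\le d/2\}$ (the bulk near the singular point $0$), $B=\{|x+d|\le d/2\}$ (near the singular point $-d$), and $C=\rr\setminus(A\cup B)=\{x>d/2\}\cup\{x<-3d/2\}$ (the two tails). On $A$ one has $|x+d|\ge d/2$, so $\langle x+d\rangle\gtrsim\langle d\rangle$ and $\int_A \lesssim \langle d\rangle^{-q}\int_{|x|\le d/2}\langle x\rangle^{-p}\,dx$; on $B$, symmetrically, $|x|\ge d/2$ gives $\int_B \lesssim \langle d\rangle^{-p}\int_{|x+d|\le d/2}\langle x+d\rangle^{-q}\,dx$; and on $C$ one factor dominates the other ($\langle x+d\rangle\ge\langle x\rangle$ on $\{x>d/2\}$, $\langle x\rangle\ge\langle x+d\rangle$ on $\{x<-3d/2\}$), so the integrand is $\le\langle x\rangle^{-(p+q)}$, respectively $\le\langle x+d\rangle^{-(p+q)}$, and using $p+q>1$ one gets $\int_C \lesssim \langle d\rangle^{-(p+q-1)}$.

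\textbf{The borderline regimes.} The only remaining computation is $\int_{|x|\le d/2}\langle x\rangle^{-p}\,dx$, which is $O(1)$ if $p>1$, is $O(\log d)=O(\langle d\rangle^{\ee_p})$ if $p=1$, and is $O(\langle d\rangle^{1-p})$ if $p<1$. Substituting into the bound for $\int_A$ gives $\int_A \lesssim \langle d\rangle^{-\min\{q-\ee_p,\,p+q-1\}}$ in all three cases (when $p>1$ the exponent is $q=\min\{q,p+q-1\}$; when $p<1$ it is $p+q-1$; when $p=1$ it is $q-\ee_p$), and symmetrically $\int_B \lesssim \langle d\rangle^{-\min\{p-\ee_q,\,p+q-1\}}$. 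Adding the three contributions and using $\langle d\rangle^{-a}+\langle d\rangle^{-b}\le 2\langle d\rangle^{-\min(a,b)}$ for $d\ge 1$ yields $J(d)\lesssim\langle d\rangle^{-\min\{p-\ee_q,\,q-\ee_p,\,p+q-1\}}=\langle d\rangle^{-r}$, as claimed.

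\textbf{Main difficulty.} There is no genuine analytic obstacle here — the whole argument is bookkeeping across the regimes $p>1$, $p=1$, $p<1$ (and likewise for $q$). The one real subtlety, and the reason the statement cannot take $\ee=0$ uniformly, is the logarithmic divergence $\int_{|x|\le d/2}\langle x\rangle^{-1}\,dx\asymp\log d$ occurring exactly when $p=1$ (respectively $q=1$), which forces the arbitrarily small loss $\ee_p$ (respectively $\ee_q$). Beyond that, the only things to watch are that $A,B,C$ actually cover $\rr$ for $d\ge 1$ and that the implied constants stay uniform as $d\to\infty$, both routine.
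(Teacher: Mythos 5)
Your proof is correct and follows essentially the same route as the paper: reduce by translation/reflection to a one-parameter integral in $d=|\alpha-\beta|$, split $\rr$ into neighborhoods of the two singular points plus the tails, freeze the nonsingular factor on each near region, and run the case analysis $p\gtrless 1$, $q\gtrless 1$ that produces the logarithmic (hence $\ee$) loss exactly at $p=1$ or $q=1$. The only differences are cosmetic (you center at $\alpha$ rather than the midpoint and treat $d\le 1$ separately), so no further comment is needed.
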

To be able to apply the lemma to the integral term in \eqref{int-pre-calc-lem}, 
we must first check
that its conditions are met. Let $ s = 3/2 + \ee$, $r = 1- \delta$, $\ee > 0$, $
\delta \ge 0$ and observe that
\begin{equation*}
\begin{split}
2(s-1) + 2(1-r)
& = 2(s-r)
\\
& = 2[3/2 + \ee - (1 - \delta)]
\\
& = 2(1/2 + \ee + \delta)
\\
& = 1 + 2 \ee + 2 \delta > 1.
\end{split}
\end{equation*}
Furthermore, $2(s-1), 2(1-r) > 0$. Hence, Lemma~\ref{lem:calc} is applicable. 
Note that since $s > 3/2$, we see that $2(s-1) \neq 1$. However, it is possible that $2(1-r) =1$; hence we must now separate the cases $r \neq 1/2$ and $r = 1/2$. Suppose $r \neq 1/2$. Then 
\begin{equation*}
\begin{split}
\min\left\{ 2(s-1), 2(1-r), 2(s-1) + 2(1-r) -1 \right\}
& = \min\left\{ 1 + 2 \ee, 2 \delta, 2\ee + 2 \delta \right\}
\\
& = \min\left\{ 1 + 2 \ee, 2 \delta\right\}
\\
& = 2 \delta, \quad \delta \le 1/2 + \ee.
\end{split}
\end{equation*}
If $r = 1/2$, then since $s > 3/2$, we can choose $\eta > 0$ sufficiently small
such that
\begin{equation*}
\begin{split}
\min\left\{ 2(s-1) -\eta , 2(1-r), 2(1-r) + 2(s-1) - 1  \right\}
& = 1 
\\
& = 2(1 -r)
\\
& = 2\delta.
\end{split}
\end{equation*}
Hence, for $0 \le \delta \le 1/2 + \ee$, $\ee >
0$, \eqref{int-pre-calc-lem} is bounded by
\begin{equation*}
\begin{split}
C_{s,r} \int_{\rr}  | \wh{g}(\eta) |^{2} \int_{\rr} \frac{1}{\left( 1
+ |\eta| \right )^{2 \delta}} d \xi d \eta 
& \le
\| g \|_{H^{-\delta}}^{2}
\\
& = \| g \|_{H^{r-1}}^{2}.
\end{split}
\end{equation*}
Our restriction on $\delta$ is equivalent to the restriction 
$$1-r \le 1/2 + s - 3/2, \quad r \le 1, \ s > 3/2,$$ or
$$s + r \ge 2,  \quad  r \le 1, \ s > 3/2.$$ Therefore, 
\begin{equation*}
\begin{split}
\| f g \|_{H^{r-1}} \lesssim \| f \|_{H^{s-1}} \| g \|_{H^{r-1}},
\quad s + r \ge 2, \ s > 3/2, \ r \le 1.
\end{split}
\end{equation*}
The remainder of the proof is analogous to that in the periodic case.
\end{proof}
\begin{proof}[Proof of Lemma~\ref{lem:calc}]
By the change of variable $x \mapsto x/2 + (\alpha + \beta)/2$, we have
\begin{equation}
\label{rur}
\begin{split}
\int_{\rr} \frac{1}{\langle x - \alpha \rangle^{p} \langle  x -
\beta
\rangle^{q}}d x
& \simeq \int_{\rr} \frac{1}{\langle x/2 - (\alpha - \beta)/2  \rangle^{p}
\langle  x/2 + (\alpha - \beta)/2 \rangle^{q}} d x
\\
& \lesssim \int_{\rr} \frac{1}{\langle x - (\alpha - \beta)  \rangle^{p}
\langle  x + (\alpha - \beta) \rangle^{q}} d x
\\
& = \int_{\rr} \frac{1}{\langle a - x \rangle ^{p} \langle a + x \rangle
^{q}} d x, \quad a = \alpha - \beta
\end{split}
\end{equation}
which for $a =0$ reduces to 
\begin{equation*}
\begin{split}
\int_{\rr} \frac{1}{\langle x \rangle ^{p+q}} d x 
& = 2 \int_{0}^{\infty} \frac{1}{(1 + x)^{p+q}} d x
\\
& = \frac{2}{p+q -1}.
\end{split}
\end{equation*}
We now handle the case $a \neq 0$. Note that by the change of variable $x \mapsto
-x$ we may restrict our attention to the case  $a > 0$ without loss of
generality. Split
\begin{equation*}
\begin{split}
\int_{\rr} \frac{1}{\langle a + x \rangle ^{p} \langle a - x \rangle
^{q}} d x
& = \int_{-2a}^{2a}
\frac{1}{\langle a + x \rangle ^{p} \langle a - x \rangle
^{q}} d x
\\
& + \int_{| x | \ge 2a} 
\frac{1}{\langle a + x \rangle ^{p} \langle a - x \rangle
^{q}} d x
\\
& = I + II.
\end{split}
\end{equation*}
Then
\begin{equation*}
\begin{split}
I 
& = \int_{0}^{2a}
\frac{1}{\langle a + x \rangle ^{p} \langle a - x \rangle
^{q}} d x + \int_{-2a}^{0}
\frac{1}{\langle a + x \rangle ^{p} \langle a - x \rangle
^{q}} d x.
\end{split}
\end{equation*}
We bound the first term by
\begin{equation*}
\begin{split}
\sup_{0 \le x \le 2a} \frac{1}{\langle a + x \rangle
^{p}} \int_{0}^{2a} \frac{1}{\langle a - x \rangle ^{q}} d x
& = \frac{1}{\langle a \rangle ^{p}} \int_{0}^{2a} \frac{1}{(1 + | a -
x
|)^{q}} d x  
\\
& = \frac{2}{\langle a \rangle ^{p}} \int_{0}^{a} \frac{1}{(1 + a -
x)^{q}} d x
\\
& \lesssim
\begin{cases}
1/{\langle a \rangle ^{p}} \left| 1 - 1/{(1 +
a)^{q -1}} \right|, \quad & q \neq 1
\\
\log(1+a)/{\langle a \rangle^{p} }, \quad & q =1.
\end{cases}
\end{split}
\end{equation*}
But
\begin{equation*}
\begin{split}
\frac{1}{\langle a \rangle ^{p}}\left| 1 - \frac{1}{(1 +
a)^{q -1}} \right|
& \lesssim
\begin{cases}
1/{\langle a \rangle^{p} }, \quad & q > 1
\\
1/{\langle a \rangle ^{p + q -1}}, \quad & q < 1
\end{cases}
\end{split}
\end{equation*}
and
\begin{equation*}
\begin{split}
\frac{\log(1 + a)}{\langle a \rangle^{p} } \le  \frac{c_{\ee}}{\langle a
\rangle ^{p - \ee}} \ \text{for any} \ \ee > 0.
\end{split}
\end{equation*}
For the second term, we bound by
\begin{equation*}
\begin{split}
& \sup_{-2a \le x \le 0} \frac{1}{\langle a - x \rangle
^{q}} \int_{-2a}^{0} \frac{1}{\langle a + x \rangle ^{p}} d x
\\
& = \frac{1}{\langle a \rangle ^{q}} \int_{-2a}^{0} \frac{1}{(1 + | a +
x
|)^{p}} d x 
\\
& = \frac{2}{\langle a \rangle ^{q}} \int_{-a}^{0} \frac{1}{(1 + a +
x)^{p}} d x
\\
& \lesssim
\begin{cases}
1/{\langle a \rangle ^{q}} \left| 1 - 1/{(1 +
a)^{p -1}} \right|, \quad & p \neq 1
\\
\log(1+a)/{\langle a \rangle^{q} }, \quad & p =1.
\end{cases}
\end{split}
\end{equation*}
But
\begin{equation*}
\begin{split}
\frac{1}{\langle a \rangle ^{q}}\left| 1 - \frac{1}{(1 +
a)^{p -1}} \right|
& \lesssim
\begin{cases}
1/{\langle a \rangle ^{q}}, \quad & p > 1
\\
1/{\langle a \rangle ^{p + q -1}}, \quad & p < 1
\end{cases}
\end{split}
\end{equation*}
and
\begin{equation*}
\begin{split}
\frac{\log(1 + a)}{\langle a \rangle^{q} } \le  \frac{c_{\ee}}{\langle a
\rangle ^{q - \ee}} \ \text{for any} \ \ee > 0.
\end{split}
\end{equation*}
Therefore,
\begin{equation*}
I \le  \frac{c_{p,q, \ee}}{\langle a \rangle ^{\min\left\{ p-\ee_{q}, q -\ee_{p}, p + q-1 \right\}}}.
\end{equation*}
Also
\begin{equation*}
\begin{split}
II 
& = \int_{x \ge 2a} \frac{1}{(1 + x - a)^{p} (1 + x +
a)^{q}} d x
\\
& \le \int_{x \ge 2a} \frac{1}{(1 + x -a)^{p+q}} d x
\\
& \simeq \frac{1}{\langle a \rangle^{p+q -1}}, \qquad p + q > 1.
\end{split}
\end{equation*}
Collecting our estimates for $I$ and $II$ we see that for 
$p, q > 0$ such that $p +q >1$, and $r =\min\left\{p -\ee_{q}, q - \ee_{p}, p+q-1
\right\}$, we have
\begin{align*}
\int_{\rr} \frac{1}{\langle a - x \rangle ^{p} \langle a + x \rangle
^{q}} d x
\le \frac{c_{p,q, \ee}}{\langle a \rangle ^{r}}.
\label{est-2}
\end{align*}
Recalling \eqref{rur}, the proof is complete.
\end{proof}
\subsection*{Acknowledgements} The author
thanks the Department of Mathematics at the University of Notre Dame and
the Arthur J. Schmitt Foundation for supporting his doctoral studies.
\newcommand{\etalchar}[1]{$^{#1}$}
\providecommand{\bysame}{\leavevmode\hbox to3em{\hrulefill}\thinspace}
\providecommand{\MR}{\relax\ifhmode\unskip\space\fi MR }
\providecommand{\MRhref}[2]{%
\href{http://www.ams.org/mathscinet-getitem?mr=#1}{#2}
}
\providecommand{\href}[2]{#2}

%
%
%
%
\end{document}